\documentclass[11pt,twoside,dvipsnames]{amsart}
\usepackage[utf8]{inputenc}
\usepackage[english]{babel}
\usepackage[T1]{fontenc}
\usepackage{xcolor}
\usepackage{amssymb}
\usepackage{geometry}
\usepackage[bookmarks=true]{hyperref}
\usepackage{subcaption}

\newtheorem{thm}{Theorem}[section]
\newtheorem{prop}[thm]{Proposition}
\newtheorem{cor}[thm]{Corollary}

\theoremstyle{definition}

\newtheorem{ex}[thm]{Example}

\newtheorem{problem}[thm]{Problem}

\geometry{verbose,tmargin=2.5cm,lmargin=2.5cm,rmargin=2.5cm}


\newcommand{\hgt}{{\rm ht}\,}

\newcommand{\ara}{{\rm ara}\,}

\newcommand{\cd}{{\rm cd}\,}
\newcommand{\chara}{{\rm char}\,}

\title[On determinantal ideals and algebraic dependence]{On determinantal ideals and algebraic dependence}

\author{Margherita Barile and Antonio Macchia}
\address{Dipartimento di Matematica, Universit\`a degli Studi di Bari ``Aldo Moro'', Via Orabona 4, 70125 Bari, Italy}
\email[M. Barile]{margherita.barile@uniba.it}
\email[A. Macchia]{macchia.antonello@gmail.com}


\begin{document}

\begin{abstract}
Let $X$ be a matrix with entries in a polynomial ring over an algebraically closed field $K$. We prove that, if the entries of $X$ outside some $(t \times t)$-submatrix are algebraically dependent over $K$,  the arithmetical rank of the ideal $I_t(X)$ of $t$-minors of $X$ drops at least by one with respect to the generic case; under suitable assumptions, it drops at least by $k$  if $X$ has $k$ zero entries.  This upper bound turns out to be sharp if $\mathrm{char}\, K=0$, since it then coincides with the lower bound provided by the local cohomological dimension.
\end{abstract}

\maketitle

\noindent {\bf Mathematics Subject Classification (2010):} 13A15, 14M10, 14M12, 13D45.

\noindent {\bf Keywords:} Determinantal ideals, algebraic dependence, arithmetical rank, sparse matrices.

\section{Introduction}

Let $X$ be an $(m \times n)$-matrix, where $m \leq n$, with entries in the polynomial ring $R$ over a field $K$. A celebrated result by Bruns and Schw\"anzl \cite[Theorem 1]{BS90} states that, if $X$ is generic (i.e., its entries are algebraically independent over $K$), then  for every $t=1,\dots,m$, the ideal $I_t(X)$ generated by the $t$-minors of $X$ can be generated by $mn-t^2+1$, but not fewer, elements up to radical. This means that there exist $q_1(X),\dots,q_{mn-t^2+1}(X) \in R$ such that
\[
\sqrt{I_t(X)}=\sqrt{(q_1(X),\dots, q_{mn-t^2+1}(X))}
\]
and $mn-t^2+1$ is the minimum number of elements for which the above equality holds. This result is independent of the field.

Given an ideal $I$ in a Noetherian ring $S$, the minimum number of elements of $S$ that generate an ideal whose radical is the same as $I$ is called the \textit{arithmetical rank} of $I$ and denoted by $\ara I$. In general, the following inequalities hold (see, e.g., \cite[Proposition 9.2]{ILLMMSW07}):
\[
\hgt I \leq \cd I \leq \ara I,
\]
where $\hgt I$ is the height of $I$, $\cd I = \max \{i \in \mathbb Z : H^i_I(S) \neq 0\}$ is the \textit{cohomological dimension} of $I$ and $H^i_I(S)$ denotes the $i$-th local cohomology module of $S$ with support in $I$.

Lyubeznik, Singh and Walther  asked in \cite[Question 8.1]{LSW16} whether the arithmetical rank of $I_t(X)$  is smaller than in the generic case, when the entries of $X$ are algebraically dependent. In \cite{BCMM14} the second author and others studied the case of $(2 \times n)$-matrices of linearly dependent linear forms and gave a positive answer  for large classes of examples.

In the present paper we consider matrices of arbitrary size whose entries are algebraically dependent over an algebraically closed field. In our main result, Theorem \ref{main}, we prove that if the entries of $X$ lying outside some $(t\times t)$-submatrix are algebraically dependent over $K$, then $\ara I_t(X) \leq mn-t^2$. In particular this holds if some entry of $X$ is zero.

In Section \ref{S.sparseMatrices}, we improve Theorem \ref{main} for \textit{sparse generic matrices}, i.e, matrices whose entries are pairwise distinct variables and zeros. Sparse determinantal ideals have recently been considered in \cite{B11}, where a minimal free resolution is computed for the ideals of maximal minors. In Proposition \ref{P.antidiagonals} we prove that, if $X$ is a sparse matrix with $k$ zeros, where $k \leq \min\{2t+1,m+n-2t\}$ and the zeros are placed on consecutive antidiagonals starting from the upper-left corner, then the arithmetical rank of $I_t(X)$ drops at least by $k$ with respect to the generic case. We actually have that, in characteristic $0$, $\ara I_t(X)=\cd I_t(X)=mn-t^2-k+1$, which, for $k=0$, gives the equality for generic matrices proven in \cite{BS90}. Our result is sharp, because, in general, it fails to be true  if the number $k$ of zeros exceeds the prescribed upper bound.

A similar result holds for the ideal of maximal minors of a sparse matrix with exactly $k \leq n-m$ zeros,  see Proposition \ref{P.kZerosCols}.

In Section \ref{S.2nmatrices}  we consider the case of $(2\times n)$-matrices of linearly dependent linear forms, for which we give a complete positive answer to \cite[Question 8.1]{LSW16}.


\section{Preliminaries}

Let $m$ and $n$ be positive integers, where $m\leq n$. Let $R$ be a commutative unit ring, and $A$ an $(m\times n)$-matrix with entries in $R$. For all positive integers $t$ such that $t\leq m$, we consider the $t$-minors of $A$. More precisely, for all sequences of indices $1\leq a_1<a_2< \cdots < a_t\leq m$ and $1\leq b_1<b_2<\cdots<b_t\leq n$, we denote by $[a_1,\dots, a_t|b_1,\dots, b_t]$ the determinant of the submatrix of $A$ formed by the rows of indices $a_1,\dots, a_t$ and the columns of indices $b_1,\dots, b_t$. By abuse of terminology, the term {\it minor} will also be referred to this submatrix. The following facts are taken from Chapters 4 and 5 of the monograph by Bruns and Vetter \cite{BV90}, to which we refer for further details.  The set of all minors of $A$ can be endowed with the following partial order. We set
\[
[a_1,\dots, a_t|b_1,\dots, b_t]\leq [c_1,\dots, c_u|d_1,\dots, d_u],
\]
if either $t>u$ or $t=u$ and $a_i\leq c_i$, $b_i\leq d_i$ for all $i=1,\dots, t$. In this poset, all maximal chains with a fixed bottom and a fixed top have the same length. We will call {\it rank} of an element the cardinality of all maximal chains having this element as the top.
The maximum  $t$-minor is $[m-t+1, \dots, m|n-t+1,\dots, n]$, the minimum $t$-minor is $[1,\dots, t|1,\dots, t]$, all intermediate poset elements are $t$-minors.
Given a $t$-minor $[a_1,\dots, a_t|b_1,\dots, b_t]>[1,\dots, t|1,\dots, t]$, its lower neighbours are
\begin{list}{}{}
\item{(i)} the $t$-minors obtained by lowering one of its indices by one;
\item{(ii)} if $a_t<m$ and $b_t<n$, the $(t+1)$-minor $[a_1,\dots, a_t, m|b_1,\dots, b_t, n]$.
\end{list}
Hence the distance between two $t$-minors is measured by the difference between the sums of their row and column indices. Since the rank of  $[m-t+1, \dots, m|n-t+1,\dots, n]$ is $mn-t^2+1$, the rank of $[1,\dots, t|1,\dots, t]$ is $mn+t^2-t(m+n)+1$.

For all $h=1,\dots, mn-t^2+1$ let $q_h(A)$ be the sum of all minors of $A$ having rank $h$. We thus have that $q_{mn-t^2+1}(A)=[m-t+1, \dots, m|n-t+1,\dots, n]$. Since the maximum rank of the minors of order greater than $t$ is $mn-t^2-2t= mn-t^2+1 -(2t+1)$,  for all $h=1,\dots, 2t+1$, $q_{mn-t^2-h+2}(A)$ is a sum of $t$-minors.
Let $I_t(A)$ be the ideal of $R$ generated by all $t$-minors of $A$. Then, according to \cite[Corollary 5.21]{BV90},
\begin{equation}\label{1}
\sqrt{I_t(A)}=\sqrt{(q_1(A),\dots, q_{mn-t^2+1}(A))},
\end{equation}
so that $\ara I_t(A)\leq mn-t^2+1$. More precisely, we have that, for all $h=1,\dots, mn-t^2+1$, if $I^{(h)}_t(A)$ is the ideal of $R$ generated by all minors of $A$ whose rank is at most $h$, then
\begin{equation}\label{2}
\sqrt{I^{(h)}_t(A)}=\sqrt{(q_1(A),\dots, q_h(A))}.
\end{equation}
By \cite[Corollary, p. 440]{BS90}, equality holds in (\ref{1}) if $A$ is a generic matrix of indeterminates over a field of characteristic zero, since, in this case, cd\,$I_t(A)=mn-t^2+1$.

\section{The Main Theorem}

Let $X$ be an $(m\times n)$-matrix ($m\leq n$) with entries in the polynomial ring $R=K[x_1,\dots, x_N]$ over the algebraically closed field $K$. For all $h=1,\dots, mn-t^2+1$, let $q_h=q_h(X)$.

\begin{thm}\label{main}
Let $t$ be an integer such that $1\leq t\leq m$ and $t<n$. If the entries of $X$ outside the minor $\Delta=[m-t+1,\dots, m|n-t+1,\dots, n]$ are algebraically dependent over $K$, then
\[
\ara I_t(X) \leq mn-t^2.
\]
\end{thm}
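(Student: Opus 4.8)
The starting point is the radical equality \eqref{1}, which already gives $\ara I_t(X)\le mn-t^2+1$ via the $mn-t^2+1$ polynomials $q_1,\dots,q_{mn-t^2+1}$. To save one generator, the plan is to show that one of these $q_h$ can be absorbed, up to radical, into the ideal generated by the others. The natural candidate is $q_{mn-t^2+1}=\Delta$, the maximal $t$-minor: I want to exhibit a polynomial relation that expresses some power of $\Delta$ in terms of the other $q_h$'s together with the remaining entries of $X$, exploiting the algebraic dependence hypothesis. More precisely, the idea is to use the hypothesis to produce a nonzero polynomial $F\in K[Y_{ij}]$ (in variables indexed by the entries of $X$ outside $\Delta$) vanishing when each $Y_{ij}$ is replaced by the corresponding entry $x$-polynomial of $X$.

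The key manoeuvre I would carry out is a change of coordinates on the affine space of matrices. Consider the generic matrix $\mathcal{X}=(u_{ij})$ over $K$ and the determinantal variety $V=V(I_t(\mathcal{X}))$; a specialization sending $(u_{ij})\mapsto(\text{entries of }X)$ corresponds to a morphism from $\operatorname{Spec} R$ into the matrix space. Since $K$ is algebraically closed and the entries outside $\Delta$ are algebraically dependent, the image of this morphism lies in a proper closed subvariety $W$ cut out by at least one equation $F=0$ in those outside-entries. The strategy is then to combine this extra equation with the standard generators-up-to-radical coming from \eqref{2}: on the open set where $\Delta\ne 0$, the submatrix $\Delta$ is invertible, and I would like to use row/column operations (Schur-complement style) to rewrite every $t$-minor, hence the whole ideal $I_t(X)$, in a form where $\Delta$ is redundant modulo $(q_1,\dots,q_{mn-t^2},F)$ — and then check that $F$ itself lies in $\sqrt{(q_1,\dots,q_{mn-t^2})}$ on the locus where $I_t(X)$ is supported, or alternatively fold $F$ into one of the lower $q_h$'s at the cost of nothing extra. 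Concretely, I expect one should set $\Delta$ aside, take the $mn-t^2$ polynomials $q_1,\dots,q_{mn-t^2}$, and show $\sqrt{(q_1,\dots,q_{mn-t^2})}\supseteq I_t(X)$ by arguing that on $V(q_1,\dots,q_{mn-t^2})$ the minor $\Delta$ must also vanish, because the algebraic dependence forces $\Delta$ to be expressible (up to radical / up to a power) through the other entries and their minors.

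The main obstacle, as I see it, is making the last implication precise: a priori the algebraic relation $F$ among the outside-entries need not directly involve $\Delta$ at all, so one has to bridge between "a relation among the entries outside $\Delta$" and "a relation that kills $\Delta$ modulo the lower-rank $q_h$'s." I would handle this by working on the complement of $V(\Delta)$: there, after multiplying by a suitable power of $\Delta$, the Laplace/cofactor expansion lets one solve for each outside-entry's contribution, so the relation $F$ among the outside entries pulls back to a relation of the form $\Delta^a\cdot(\text{something in }I_t(X))\equiv 0$ modulo $(q_1,\dots,q_{mn-t^2})$; combining with the fact that $V(\Delta)\cap V(q_1,\dots,q_{mn-t^2})\subseteq V(I_t(X))$ (which should follow from the poset structure, since the lower-rank $q_h$'s already control all the smaller minors) closes the argument. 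The "in particular" clause about a zero entry is then immediate, since a zero entry is an algebraic dependence (indeed, a linear one: the entry minus $0$), provided that zero entry lies outside $\Delta$, which one may arrange by permuting rows and columns.
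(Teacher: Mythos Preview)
Your overall shape is right --- you want to drop $q_{mn-t^2+1}=\Delta$ and keep only $mn-t^2$ generators up to radical --- but the concrete plan of showing $\sqrt{(q_1,\dots,q_{mn-t^2})}\supseteq I_t(X)$ with the \emph{unmodified} $q_h=q_h(X)$ is simply false. Take the $2\times3$ matrix
\[
X=\begin{pmatrix}0&x_1&x_2\\ x_3&x_4&x_5\end{pmatrix},\qquad t=2,\ \Delta=x_1x_5-x_2x_4,
\]
so $q_1=-x_1x_3$, $q_2=-x_2x_3$. At $(x_1,x_2,x_3,x_4,x_5)=(1,0,0,0,1)$ both $q_1,q_2$ vanish but $\Delta=1$. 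Thus $\Delta\notin\sqrt{(q_1,q_2)}$. Your Laplace/cofactor sketch actually explains why: on $V(q_1,\dots,q_{mn-t^2})\setminus V(\Delta)$ the linear-algebra argument forces every entry of $X$ outside $\Delta$ to vanish, and then the relation $F(\text{outside entries})=0$ collapses to $F(0,\dots,0)=0$, which is vacuous whenever $F$ has no constant term (the generic situation). So the dependence $F$ gives you no contradiction, and the bridge you are hoping for (``$\Delta^a\cdot(\text{something})\equiv0$ mod the lower $q_h$'s'') never materializes from $F$ alone.

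The paper's proof plugs exactly this hole, and the mechanism is the missing idea in your proposal: one does \emph{not} use $q_h(X)$ but rather $q_h(X')$ for a modified matrix $X'$ obtained by replacing certain outside entries $y_j$ by $y_j+G_j\Delta$, where the $G_j$ are built from $F$ by a short recursive algorithm (first strip $F$ so all monomials share the same support, then peel off the variables one at a time). Each $q_h(X')$ differs from $q_h(X)$ by a multiple of $\Delta$, so the inclusion $\supset$ is free. For $\subset$, on $V(q_1'(X'),\dots,q_{mn-t^2}'(X'))\setminus V(\Delta)$ the same linear-algebra argument now forces $y_j+G_j\Delta=0$ for each modified entry; the recursive design of the $G_j$ then propagates $y_1=G_1=0,\dots,y_r=G_r=0$, and the last $G_r$ has nonzero constant term, yielding the contradiction that your version of the argument could not produce. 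In the toy example above this amounts to putting $\Delta$ itself in the $(1,1)$-slot, so that $q_2'=\Delta x_5-x_2x_3$ no longer vanishes at the bad point. Your phrase ``fold $F$ into one of the lower $q_h$'s'' is pointing in the right direction, but the actual folding is this entry-modification trick, and it has to be done carefully (the two-step processing of $F$) to guarantee the induction terminates in a contradiction.
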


\begin{proof}
Let $u_1,\dots, u_k$ be algebraically dependent entries of $X$ outside $\Delta$, and let $F$ be a nonzero polynomial over $K$ in $k$ indeterminates - which will be denoted by the letter $y$ - such that $F(u_1,\dots, u_k)=0$ ($\ast$). We perform the following algorithm.\newline
{\bf Step 1} If there is some indeterminate $y$ such that $y$ divides some, but not all monomials of $F$, write $F=F'+yF''$, where no monomial of $F'$ is divisible by $y$. Then replace $F$ by $F'$, which by assumption is nonzero, and return to Step 1. Else (i.e., if all monomials of $F$ have the same support), go to Step 2.\newline
{\bf Step 2} Let $y_1,\dots, y_r$ be the indeterminates forming the support of all monomials of $F$. Proceed recursively as follows. For all $j=1,\dots, r$, let $\alpha_j$ be the maximum (positive) integer such that $y_j^{\alpha_j}$ divides $F$, and set $F_1=F$. Let $G_1$ be such that $F_1=G_1y_1^{\alpha_1}$. For all indices $j\geq 2$, set $F_j=G_{{j-1}|_{y_{j-1}=0}}$ and let $G_j$ be such that $F_j=G_jy_j^{\beta_j}$, where, for all $j$, $\beta_j$ is the maximum (positive) integer such that $y_j^{\beta_j}$ divides $F_j$. Note that $\beta_j\geq\alpha_j$. Also note that $F_j$ is a nonzero polynomial in the indeterminates $y_h$ with $j\leq h\leq r$, and $G_j$ is a polynomial in the same indeterminates and it contains some monomial not divisible by $y_j$. In particular, $G_r$ is a polynomial in $y_r$ with nonzero constant term ($\ast\ast$).
Now identify each indeterminate $y_j$ involved in Step 2 with the entry of $X$ that replaces $y_j$ in relation ($\ast$), and substitute this entry with $y_j+G_j\Delta$. Call $X'$ the new matrix obtained in this way, and, for all $h=1,\dots, mn-t^2$, set $q'_h=q_h(X')$.\newline
Let $M'$ be the set of $t$-minors of $X'$ other than $\Delta$ (this minor remains unchanged). Then, in view of equation (\ref{2}) we have:
\[
\sqrt{(M')}=\sqrt{(q'_1,\dots, q'_{mn-t^2})}.
\]
We prove that
\[
\sqrt{I_t(X)}=\sqrt{(q'_1,\dots, q'_{mn-t^2})}.
\]
The inclusion $\supset$ is clear, since each $q'_h$ differs from $q_h$ by a multiple of $\Delta$ in $R$. For the inclusion $\subset$ it suffices to prove that whenever, for some ${\bf x}\in K^{mn}$, all polynomials $q'_h$ vanish at ${\bf x}$, then  $\Delta$ also vanishes at ${\bf x}$: in this case the same is true for all polynomials $q_h$, hence, by (\ref{1}),  for all $t$-minors of $X$, and the claim follows by Hilbert's Nullstellensatz. Suppose by contradiction that, under the given assumption, $\Delta$ does not vanish at ${\bf x}$. In the sequel, for the sake of simplicity, we will identify each polynomial with its evaluation at ${\bf x}$; by  a similar convention, we will also call $X'$ the matrix obtained by evaluating all entries of $X'$ at ${\bf x}$. First observe that, since $\Delta\neq 0$, the last $t$ columns of $X'$ are linearly independent. On the other hand, if $i$ is any index such that $1\leq i\leq n-t$, then, for every index $j$ such that $n+t-1\leq j\leq n$, and every sequence of indices $1\leq a_1<\cdots<a_t\leq m$, we also have that, in $X'$,
\[
[a_1,\dots, a_t|i, n-t+1, \dots, \widehat{j},\dots, n]=0,
\]
which implies that each set formed by the $i$-th column and $t-1$ out of the last $t$ columns of $X'$ is linearly dependent. This can only be true if the $i$-th column of $X'$ is zero.  A similar argument can be applied to the rows of $X'$ with indices between 1 and $m-t$. This proves that all entries of $X'$ outside $\Delta$ are zero. Now consider $F$. The entries of $X$ corresponding to the indeterminates $y$ involved in Step 1 remain unchanged; hence they are also entries of $X'$. Moreover, they lie outside $\Delta$, so that they vanish.  It follows that $F$ also vanishes after all iterations of Step 1 are completed. We show that, for all $i=1,\dots, r$, both $y_i$ and $G_i$ are zero; by virtue of ($\ast\ast$), when $i=r$, this provides a contradiction. For the inductive basis note that $y_1+G_1\Delta=0$, so that $y_1\neq 0$ would imply $G_1\neq 0$. But $F=F_1= G_1y_1^{\alpha_1}=0$, thus, in return, we would deduce that $y_1=0$. Since $\Delta\neq 0$, it then follows that $G_1=0$. Now, for some index $h$, $1< h\leq r$, suppose that  $y_{h-1}=G_{h-1}=0$. We then have $0=G_{{h-1}|_{y_{h-1}=0}}=F_h=G_hy_h^{\beta_h}$, whence $G_h=0$ or $y_h=0$. But $y_h+G_h\Delta=0$, which implies $y_h=G_h=0$, as claimed.
\end{proof}

\begin{cor}\label{C.oneZero}
Let $t$ be an integer such that $1\leq t\leq m$ and $t<n$. If some entry of $X$ is zero, then
\[
\ara I_t(X) \leq mn-t^2.
\]
\end{cor}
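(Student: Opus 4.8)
The plan is to reduce Corollary \ref{C.oneZero} to Theorem \ref{main}. The key observation is that a single zero entry is already a (trivial) instance of algebraic dependence: if $u$ is an entry of $X$ with $u=0$, then the nonzero polynomial $F=y_1\in K[y_1]$ satisfies $F(u)=0$, so the one-element family $\{u\}$ is algebraically dependent over $K$. Thus it suffices to arrange for a zero entry of $X$ to lie outside the corner minor $\Delta=[m-t+1,\dots, m|n-t+1,\dots, n]$ and then invoke Theorem \ref{main} with $k=1$.

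First I would note that permuting the rows and columns of $X$ only permutes its $t$-minors up to sign, hence leaves the ideal $I_t(X)$, and in particular $\ara I_t(X)$, unchanged; so we are free to relabel rows and columns. Since the hypothesis gives $t<n$, the first column of $X$ is not among the columns $n-t+1,\dots, n$ occurring in $\Delta$, so every entry in the first column of $X$ lies outside $\Delta$. Interchanging the column of $X$ that contains the prescribed zero entry with the first column, we may therefore assume that $X$ has a zero entry outside $\Delta$. At that point the hypothesis of Theorem \ref{main} is met, and we conclude $\ara I_t(X)\leq mn-t^2$.

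There is no real obstacle here; the only things worth verifying are that $t<n$ genuinely leaves a column outside $\Delta$ (it does, since an entry $(i,j)$ belongs to $\Delta$ only when $j\geq n-t+1>1$) and that the algorithm in the proof of Theorem \ref{main} behaves sensibly in this degenerate case: with $k=1$ and $F=y_1$, Step 1 does nothing, Step 2 returns $\alpha_1=1$ and $G_1=1$ (a polynomial in $y_1$ with nonzero constant term, so ($\ast\ast$) holds), and the substitution simply replaces the zero entry of $X$ by $\Delta$, so the argument of Theorem \ref{main} applies verbatim. Alternatively, one could avoid the relabeling step entirely by first observing that Theorem \ref{main} remains valid with an arbitrary $(t\times t)$-submatrix in place of the corner one, again because row and column permutations do not affect $\ara I_t(X)$.
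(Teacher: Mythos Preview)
Your proposal is correct and follows the same approach as the paper: the corollary is stated there with no proof, as an immediate consequence of Theorem \ref{main}, and your argument spells out precisely the intended reduction (a zero entry is trivially algebraically dependent, and since $t<n$ a column permutation moves it outside $\Delta$). Your additional check that the algorithm of Theorem \ref{main} degenerates gracefully when $k=1$ and $F=y_1$ is a nice sanity check but not strictly needed.
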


The next Proposition \ref{P.antidiagonals} provides additional information for the case in which $k=1$: namely, if $\chara K=0$ and $X$ has one zero entry, whereas the remaining entries are pairwise distinct indeterminates, then $\ara I_t(X)=\cd I_t(X)= mn-t^2$.

\section{Improving the upper bound for sparse matrices} \label{S.sparseMatrices}

The result in Corollary \ref{C.oneZero} can be improved for certain classes of matrices having some zero entries. \newline
Given an $(m\times n)$-matrix $A=(a_{ij})$, for all $h=1,\dots, m+n$, the $h$-th {\it antidiagonal} is the sequence $(a_{i,h-i+1})_{i=1,\dots, h}$.

\begin{prop}\label{P.antidiagonals}
Suppose that, for some integer $k$ such that $1\leq k\leq \min\{2t+1,m+n-2t\}$, $k$ entries of $X$, lying on consecutive antidiagonals starting from the left, are zero. Then
\[
\ara I_t(X)\leq mn-t^2-k+1.
\]
\end{prop}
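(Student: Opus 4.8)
The plan is to imitate the proof of Theorem \ref{main}, but to exploit the special placement of the zero entries so that instead of one minor $\Delta$ we can set $k$ of the polynomials $q_h$ to zero (or rather, absorb them), reducing $\ara I_t(X)$ by $k$ instead of just by $1$. Recall from the Preliminaries that the sum $q_{mn-t^2+1}(A)$ is exactly the top $t$-minor $\Delta=[m-t+1,\dots,m|n-t+1,\dots,n]$, and that for $h=1,\dots,2t+1$ the polynomial $q_{mn-t^2-h+2}(A)$ is a sum of $t$-minors only (no minors of higher order intervene, because their maximal rank is $mn-t^2-2t$). So the top $2t+1$ of the $q_h$'s are \emph{pure sums of $t$-minors}. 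The condition $k\le 2t+1$ is there precisely so that the $k$ relevant $q_h$'s fall into this pure range, and the condition $k\le m+n-2t$ guarantees that there are enough antidiagonals (the $h$-th antidiagonal for $h=2,\dots,k+1$, say) available near the upper-left corner, each of which meets the minor $\Delta$ in no entry.

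\textbf{Step 1: identify which $q_h$ the zeros kill.} The zeros lie on consecutive antidiagonals starting from the upper-left; say the entries $a_{i,h-i+1}$ with $2\le h\le k+1$ are $0$ (the first antidiagonal has the single entry $a_{11}$; whether the count starts at $h=1$ or $h=2$ is a bookkeeping detail I would fix once). The key combinatorial observation is: a $t$-minor of $X$ of rank $r$, with $r$ in the top range $\{mn-t^2-2t+1,\dots,mn-t^2+1\}$, uses row-and-column index sets that are forced to be ``large'', i.e. close to the maximal minor $\Delta$; such a minor either equals $\Delta$ or necessarily contains one of the zero entries sitting on the low antidiagonals, hence vanishes. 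More precisely, I expect to show that for $h=1,\dots,k$ every $t$-minor of rank $mn-t^2-h+2$ other than those already handled is identically zero, so that $q_{mn-t^2-h+2}(X)=0$ for $h=2,\dots,k$, while $q_{mn-t^2+1}(X)=\Delta$. This is the heart of the argument and where the antidiagonal hypothesis and the numerical bound on $k$ are actually used.

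\textbf{Step 2: the substitution and the Nullstellensatz argument.} Having arranged that $q_{mn-t^2-k+2}(X)=\cdots=q_{mn-t^2}(X)=0$ and $q_{mn-t^2+1}(X)=\Delta$, I still need to eliminate $\Delta$ without increasing the count past $mn-t^2-k+1$. Here I invoke Theorem \ref{main}: since $X$ has a zero entry outside $\Delta$ (e.g. $a_{11}$), its entries outside $\Delta$ are algebraically dependent, so the substitution performed in the proof of Theorem \ref{main} replaces some entry $y_j$ outside $\Delta$ by $y_j+G_j\Delta$ and yields polynomials $q'_1,\dots,q'_{mn-t^2}$ with $\sqrt{I_t(X)}=\sqrt{(q'_1,\dots,q'_{mn-t^2})}$. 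The point is that this substitution only modifies entries outside $\Delta$, which are precisely the entries that can appear in the top-range $q_h$'s alongside the forced zeros; I must check that after the substitution the polynomials $q'_{mn-t^2-k+2},\dots,q'_{mn-t^2}$ still vanish (each differs from $q_h(X)=0$ by a multiple of $\Delta$, but more care is needed because the substituted matrix $X'$ no longer literally has zeros there — instead the relevant minors become divisible by $\Delta$, which is enough for the vanishing-locus argument). Then $\sqrt{I_t(X)}=\sqrt{(q'_1,\dots,q'_{mn-t^2-k+1})}$ by the same Nullstellensatz reasoning as in Theorem \ref{main}: if all of $q'_1,\dots,q'_{mn-t^2-k+1}$ vanish at a point ${\bf x}$, one argues that $\Delta({\bf x})=0$, whence all $q'_h({\bf x})=0$ for $h>mn-t^2-k+1$ as well (those either vanish identically or are multiples of $\Delta$), hence every $t$-minor of $X$ vanishes at ${\bf x}$.

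\textbf{Main obstacle.} The delicate point is Step 1 — proving that all the ``large-rank'' $t$-minors except $\Delta$ are killed by the prescribed zeros. One must translate ``rank $\ge mn-t^2-k+2$'' via the formula for the rank of a $t$-minor (the rank of $[a_1,\dots,a_t|b_1,\dots,b_t]$ is $mn-t^2+1-\sum_i((m-t+i)-a_i)-\sum_i((n-t+i)-b_i)$, roughly) into a bound on how far the index sets can be from the maximal ones, and then check that any such index set, other than the maximal one, forces some matrix position $(i,j)$ with $i+j\le k+1$ to lie in the submatrix. I expect this to be a short but slightly fiddly counting argument, and getting the exact correspondence between ``the $h$-th from the top $q$'' and ``needs the first $h-1$ antidiagonals to be zero'' right is where the constants $2t+1$ and $m+n-2t$ will have to be pinned down exactly; the sharpness remark in the introduction (failure when $k$ exceeds the bound) is a useful sanity check on those constants. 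I would also need to double-check the edge cases $t=m$ and $k$ equal to one of the two bounds.
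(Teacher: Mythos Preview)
Your Step 1 is based on a claim that is false, and this is a fatal gap. You assert that a $t$-minor of rank $r\ge mn-t^2-k+2$ ``either equals $\Delta$ or necessarily contains one of the zero entries sitting on the low antidiagonals, hence vanishes.'' But high-rank $t$-minors are precisely those whose row and column indices are \emph{large} (close to the maximal minor $\Delta=[m-t+1,\dots,m|n-t+1,\dots,n]$), so their entries sit in the lower-right part of the matrix --- exactly the region \emph{farthest} from the upper-left zeros. Concretely, take $m=n=3$, $t=2$, $k=2$ with zeros at $(1,1)$ and $(2,1)$. The rank-$5$ minor $[1,3|2,3]$ uses the entries at positions $(1,2),(1,3),(3,2),(3,3)$, none of which is zero; hence $q_5(X)=[1,3|2,3]+[2,3|1,3]$ is not identically zero. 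So your plan to conclude $q_{mn-t^2-k+2}(X)=\cdots=q_{mn-t^2}(X)=0$ collapses, and with it Step 2, which was built on this vanishing.

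The paper's argument is structurally different and does not try to make any $q_h(X)$ vanish. Instead it performs a \emph{recursive substitution}: the $k$-th zero entry is replaced by $p_k=q_{mn-t^2+1}(X)=\Delta$, then the $(k-1)$-th zero by $p_{k-1}=q_{mn-t^2}(X_1)$ of the modified matrix, and so on, producing $X'=X_k$. The combinatorial work (and the place where the bounds $k\le 2t+1$ and $k\le m+n-2t$ enter) is to check that the minors appearing in each $p_h$ live on antidiagonals of index strictly larger than the slot where $p_h$ is inserted, so later substitutions do not disturb earlier ones; this gives $p_h=q'_{mn-t^2-k+1+h}$. The Nullstellensatz step is then an induction on $h$: assuming $q'_1,\dots,q'_{mn-t^2-k+1},p_1,\dots,p_{h-1}$ vanish at $\mathbf{x}$ while $p_h(\mathbf{x})\ne0$, one looks at the $(t+1)\times(t+1)$ submatrix $Y$ of $X'$ formed by adding the row and column of the slot of $p_h$ to the rows and columns of a summand $\mu$ of $p_h$, observes that every $t$-minor of $Y$ involving that extra row (or column) has strictly smaller index-sum than $\mu$ and hence already vanishes, and deduces by elementary linear algebra (the extra row being nonzero because $p_h(\mathbf{x})\ne0$) that $\mu$ itself vanishes --- a contradiction. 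This linear-algebra pivot is the genuine idea you are missing; the zeros are not used to kill minors directly but to provide slots into which the top $q$-polynomials are absorbed.
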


\begin{proof} We perform on $X$ the following recursive procedure. Number the zero entries  according to the following ordering: the zero at $(r,s)$ precedes the zero at $(u,v)$ if either $r+s<u+v$ or $r+s=u+v$ and $r<u$. Thus, in particular, the first zero entry is the one  at $(1,1)$. First obtain a new matrix $X_1$ from $X_0=X$ by replacing the $k$-th zero entry with $p_k=q_{mn-t^2+1}(X)$. For all $h=2,\dots,k$, the $h$-th step consists in obtaining a new matrix $X_h$ from $X_{h-1}$ by replacing the $(k-h+1)$-th zero entry with $p_{k-h+1}=q_{mn-t^2-h+2}(X_{h-1})$.The last step consists in putting $p_1=q_{mn-t^2-k+2}(X_{k-1})$ at (1,1). For all $i=1,\dots, mn-t^2+1$, let $q'_i=q_i(X_k)$. Also set $X'=X_k$.
Fix an index $h$ with $1\leq h\leq k$.  Since $k\leq 2t+1$,  all summands of $p_1,\dots, p_h$ are $t$-minors.   Let $\mu=[a_1,\dots, a_t|b_1,\dots, b_t]$ be a minor appearing as a summand in $p_h$.  According to the above recursive procedure, $p_h$ is inserted in an antidiagonal of index at most $h$. Hence, if $(r,s)$ is its position in the matrix, we have $r+s\leq h+1$. Now, given a $t$-minor greater than $[1,\dots, t|1,\dots, t]$, each of its lower neighbours that is a $t$-minor is obtained by lowering one of its indices by 1. Thus the sum of any subset of indices is lowered at most by $k-h$ when passing from $[m-t+1,\dots, m|n-t+1,\dots, n]$ (the only summand of $p_k$) to $\mu$ (a summand of $p_h$).  In particular, we have that, for all $i,j=1,\dots, t$,
\begin{equation}\label{3}
a_i+b_j\geq m-t+1+n-t+1-k+h=m+n-2t-k+h+2> h+1,
\end{equation}
which means that all entries of $\mu$ lie on antidiagonals with indices greater than $h$. Hence the subsequent steps of the algorithm leave these minors unchanged. Thus, for all $h=1,\dots, k$, $p_h=q'_{mn-t^2-k+1+h}$.
We show that
\[
\sqrt{I_t(X)}=\sqrt{(q'_1,\dots, q'_{mn-t^2-k+1})}.
\]
Note that the polynomials appearing on the right-hand side are those not involved in the above procedure.  We first show the inclusion $\supset$: we inductively prove that, for all $i=1,\dots, k$,  $I_t(X_i)\subset I_t(X_{i-1})$. This is clear for $i=1$. For $i\geq 2$, it suffices to note that every  $t$-minor of $X_i$ differs from the corresponding minor in $X_{i-1}$ by a multiple of $q_{mn-t^2-i+2}(X_{i-1})$. We now prove $\subset$. Once again, we use Hilbert's Nullstellensatz. We assume that at some ${\bf x}\in K^{mn}$ all polynomials $q'_i$, for $1\leq i\leq mn-t^2-k+1$, vanish, and we deduce that the same is true for the remaining (last) $k$ polynomials  $q'_i$, with $mn-t^2-k+2\leq i\leq mn-t^2+1$. Since, for $1\leq i\leq mn-t^2+1$, $q'_i$ differs from $q_i$ by an element of the ideal $(q'_{mn-t^2-k+2},\dots, q'_{mn-t^2+1})$, it will follow that
for $1\leq i\leq mn-t^2+1$,  $q_i$ also vanishes at ${\bf x}$, which, in view of equation (\ref{1}), will imply our claim.  We will proceed inductively on $h\geq 1$, showing that, if  all polynomials $q'_{1}, \dots, q'_{mn-t^2-k+1}, p_1, \dots, p_{h-1}$, vanish at ${\bf x}$, then the same is true for $p_h$. We know that, under the given assumption, all minors of $X_k$ appearing as summands in the polynomials  $q'_{1}, \dots, q'_{mn-t^2-k+1}, p_1\dots, p_{h-1}$ vanish.

Suppose by contradiction that $p_h$ does not vanish at ${\bf x}$. Let $(r,s)$ be the position taken by $p_h$ in the matrix $X'$. Then $r+s\leq h+1$. Let $\mu$ be as above. Thus, by (\ref{3}), we have $a_1+b_1>r+s$, whence $a_1>r$ or $b_1>s$. Suppose that the first case occurs. Then $\mu$ does not involve any entry of the $r$-th row. By assumption all the following minors of $X'$ vanish at ${\bf x}$ (we admit the possibility of repeated column indices, which give rise to zero minors):
\[
[r,a_1,\dots, \widehat{a_i}, \dots, a_t|b_1,\dots, b_t], \quad [r,a_1,\dots, \widehat{a_i}, \dots, a_t|s, b_1,\dots,\widehat{b_j},\dots, b_t],
\]
for $i,j=1,\dots, t$.
The reason is that, in view of (\ref{3}), for each of these minors (without repeated column indices), the sum of all row and column indices is smaller than that of $\mu$, whence each of these minors appears as a summand in one of the polynomials $q'_{1}, \dots, q'_{mn-t^2-k+1}, p_1, \dots, p_{h-1}$. Let $Y$ be the submatrix of $X'$ formed by the rows with indices $r,a_1,\dots, a_t$ and by the columns with indices $s,b_1,\dots, b_t$, evaluated at ${\bf x}$. We have just shown that all sets formed by the $r$-th row of $Y$ and other $t-1$ rows of $Y$ are linearly dependent. But as the $r$-th row of $Y$ is nonzero (because, by assumption, its entry $p_h({\bf x})$ does not vanish), it follows that, in $Y$, the rows with indices $a_1,\dots, a_t$ are linearly dependent. Hence, in particular, $\mu$ vanishes at ${\bf x}$. The same conclusion is drawn if $b_1>s$, by a similar argumentation, in which the roles of rows and columns are interchanged. This proves that $p_h$ vanishes at ${\bf x}$, as desired.
\end{proof}

\begin{ex}
Notice that, if $k$ does not fulfil the upper bound in Proposition \ref{P.antidiagonals}, then it could be that $\ara I_t(X) > mn-t^2-k+1$. In fact, let us consider the matrix
\[
X = \left[\begin{array}{cccc}
0 & x_1 & x_2 & x_3 \\
0 & x_4 & x_5 & x_6 \\
x_7 & x_8 & x_9 & x_{10}
\end{array} \right]
\]
and the ideal of maximal minors $I=I_3(X)$ in the polynomial ring $R=K[x_1,\dots,x_{10}]$, where $\chara K=0$. We show that, in this case, $\ara I=\cd I=3>2$. First of all, $\ara I \leq 12-9=3$ by Corollary \ref{C.oneZero}. Now consider the following Brodmann sequence of local cohomology:
\[
\cdots \longrightarrow H^3_{I+(x_7)}(R) \longrightarrow H^3_I(R) \longrightarrow H^3_I(R_{x_7}) \longrightarrow H^4_{I+(x_7)}(R) \longrightarrow \cdots.
\]
Notice that $I+(x_7)$ is generated by two elements, hence $H^3_{I+(x_7)}(R)=H^4_{I+(x_7)}(R)=0$, so that $H^3_I(R) \simeq H^3_{I}(R_{x_7})$. On the other hand, by virtue of the Independence Theorem \cite[Theorem 4.2.1]{BS13}, $H^3_I(R_{x_7}) \simeq H^3_{I_{x_7}}(R_{x_7})$, where $I_{x_7}=(x_1x_5-x_2x_4,x_1x_6-x_3x_4,x_2x_6-x_3x_5) \subset R_{x_7}$ is the ideal of maximal minors of a generic $(2 \times 3)$-matrix. Therefore $\cd I_{x_7} = 3$ by \cite[Corollary p. 440]{BS90}, so that $H^3_I(R_{x_7}) \neq 0$.  It follows that $H^3_I(R) \neq 0$, whence $\cd I \geq 3$.
\end{ex}

\begin{cor}\label{C.kZeros}
Let $k$ be an integer such that $0\leq k\leq  \min\{2t+1,m+n-2t\}$. Suppose that $X$ has exactly $k$ zero entries, which lie on consecutive antidiagonals starting from the left, whereas the remaining entries are pairwise distinct indeterminates. If $\chara K=0$, then
\[
\ara I_t(X)=\cd I_t(X)=mn-t^2-k+1.
\]
\end{cor}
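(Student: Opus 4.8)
The plan is to obtain the upper bound almost for free and then to prove the matching lower bound $\cd I_t(X)\ge mn-t^2-k+1$ by induction on $k$. One always has $\cd I_t(X)\le\ara I_t(X)$, and $\ara I_t(X)\le mn-t^2-k+1$ is Proposition~\ref{P.antidiagonals} for $k\ge 1$ (which is the only place the hypothesis $k\le 2t+1$ gets used) and equation~(\ref{1}) for $k=0$. For the lower bound, the base case $k=0$ is the theorem of Bruns and Schw\"anzl \cite{BS90} recalled in the Preliminaries, $\cd I_t(X)=mn-t^2+1$ for a generic matrix in characteristic $0$.

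For the inductive step, assume $k\ge 1$, order the positions of the zero entries of $X$ by the rule used in the proof of Proposition~\ref{P.antidiagonals}, and let $(r,s)$ be the last one. Introduce a new indeterminate $z$, set $S=R[z]$, and let $X^+$ be obtained from $X$ by placing $z$ in position $(r,s)$. Then $X^+$ has exactly $k-1$ zero entries, namely the first $k-1$ positions in the same ordering; these still lie on consecutive antidiagonals starting from the left, so $X^+$ satisfies the hypotheses of the statement being proved (with $k-1$ in place of $k$), and the inductive hypothesis gives $H^i_{I_t(X^+)}(S)=0$ for $i>N$ and $H^N_{I_t(X^+)}(S)\ne 0$, where $N:=mn-t^2-k+2$.

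Two computations now transfer this to $X$. First, Laplace-expanding along the entry in position $(r,s)$ shows that each $t$-minor of $X^+$ differs from the corresponding $t$-minor of $X$ by a multiple of $z$ and conversely, so that $I_t(X^+)+(z)=I_t(X)S+(z)$. Combining the Brodmann sequence
\[
\cdots\to H^{\ell-1}_{I_t(X)S}(S)\to H^{\ell-1}_{I_t(X)S}(S_z)\to H^{\ell}_{I_t(X)S+(z)}(S)\to H^{\ell}_{I_t(X)S}(S)\to\cdots
\]
with flat base change along $R\hookrightarrow S$ — which identifies the map $H^j_{I_t(X)S}(S)\cong H^j_{I_t(X)}(R)[z]\hookrightarrow H^j_{I_t(X)}(R)[z,z^{-1}]\cong H^j_{I_t(X)S}(S_z)$ as an inclusion — one gets $H^{\ell}_{I_t(X^+)+(z)}(S)\cong\bigoplus_{j\ge 1}H^{\ell-1}_{I_t(X)}(R)$, hence $\cd\bigl(I_t(X^+)+(z)\bigr)=\cd I_t(X)+1$ (the first term computed in $S$, the second in $R$). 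Second, over the localization $S_z=S[z^{-1}]$ the entry $z$ is a unit, so elementary row and column operations (which leave the ideal of $t$-minors unchanged), followed by a permutation of rows and columns, turn $X^+$ into a block-diagonal matrix with blocks $(z)$ and an $(m-1)\times(n-1)$ matrix $X''$; thus $I_t(X^+)S_z=I_{t-1}(X'')S_z$, and equation~(\ref{1}) applied to $X''$ over $S_z$ gives
\[
\cd\bigl(I_t(X^+)S_z\bigr)\le\ara I_{t-1}(X'')\le(m-1)(n-1)-(t-1)^2+1<N,
\]
so in particular $H^N_{I_t(X^+)}(S_z)=0$.

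Finally, feeding these into the Brodmann sequence for $I_t(X^+)$ and $z$ over $S$: since $H^N_{I_t(X^+)}(S_z)=0$ while $H^N_{I_t(X^+)}(S)\ne 0$, the map $H^N_{I_t(X^+)+(z)}(S)\to H^N_{I_t(X^+)}(S)$ is surjective, hence $H^N_{I_t(X^+)+(z)}(S)\ne 0$, and the first computation then gives $\cd I_t(X)=\cd\bigl(I_t(X^+)+(z)\bigr)-1\ge N-1=mn-t^2-k+1$. With the upper bound this forces $\cd I_t(X)=\ara I_t(X)=mn-t^2-k+1$, which closes the induction. I expect the crux to be the numerical inequality in the second computation: inverting $z$ and clearing a full row, a full column and one step in $t$ leaves a localized ideal of cohomological dimension at most $(m-1)(n-1)-(t-1)^2+1$, and this stays strictly below $N$ exactly when $k\le m+n-2t$. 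This is the single place where that hypothesis enters the present argument, and it cannot be relaxed, as the preceding example shows.
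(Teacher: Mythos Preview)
Your proof is correct and follows essentially the same strategy as the paper's: induction on $k$ with the Bruns--Schw\"anzl result as base case, restoring one zero entry to a variable to invoke the inductive hypothesis, localizing at that entry to reduce $I_t$ to $I_{t-1}$ of an $(m-1)\times(n-1)$ matrix (this is where $k\le m+n-2t$ enters, in both arguments), and a local cohomology long exact sequence to transfer the non-vanishing down one degree. The only difference is cosmetic: the paper uses the long exact sequence coming from $0\to S\xrightarrow{\cdot z}S\to S/(z)\to0$ together with the Independence Theorem, whereas you reach the same conclusion via two Brodmann sequences and flat base change.
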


\begin{proof}
In view of Proposition \ref{P.antidiagonals} it suffices to prove that $\cd I_t(X)\geq mn-t^2-k+1$.  Note that $X$ is obtained from a generic $(m\times n)$-matrix by setting $k$ entries, say $x_1,\dots, x_k$, equal to zero. Let $A_k=R/(x_1,\dots, x_k)$. We show that
\[
H^{mn-t^2-k+1}_{I_t(X)}(A_k)\neq 0,
\]
which will immediately yield the claim. We proceed by induction on $k\geq 0$. For all $h=0,\dots, k$ let $X_h$ be the matrix obtained from the generic matrix by setting $x_1,\dots, x_h$ equal to zero, so that $X_k=X$. The inductive basis is \cite[Corollary, p. 440]{BS90}. So let $k>0$ and suppose that the claim is true for $k-1$. The short exact sequence of $A_{k-1}$-modules
\[
0\to A_{k-1} \mathrel{\mathop{\rightarrow}^{\cdot x_k}}A_{k-1}\to A_k\to 0
\]
induces the following long exact sequence of local cohomology
\[
\cdots\rightarrow H^{mn-t^2-k+1}_{I_t(X_{k-1})}(A_k) \mathrel{\mathop{\rightarrow}^{\delta}} H^{mn-t^2-k+2}_{I_t(X_{k-1})}(A_{k-1}) \mathrel{\mathop{\rightarrow}^{\cdot x_k}} H^{mn-t^2-k+2}_{I_t(X_{k-1})}(A_{k-1}) \rightarrow \cdots
\]
In view of \cite[Lemma 4.1]{LSW16}, $I_t(X_{k-1})_{x_k}$ is the same as $I_{t-1}(Y)$ for some $(m-1)\times(n-1)$ matrix $Y$ with entries in $(A_{k-1})_{x_k}$. Since local cohomology commutes with localization, we thus have that
\[
H^{mn-t^2-k+2}_{I_t(X_{k-1})}(A_{k-1})_{x_k}=H^{mn-t^2-k+2}_{{I_t(X_{k-1})}_{x_k}}((A_{k-1})_{x_k}) = H^{mn-t^2-k+2}_{I_{t-1}(Y)}((A_{k-1})_{x_k}).
\]
But
\[
\ara I_{t-1}(Y)\leq (m-1)(n-1)-(t-1)^2+1<mn-t^2-k+2,
\]
which implies that $H^{mn-t^2-k+2}_{I_{t-1}(Y)}((A_{k-1})_{x_k})=0.$ Thus the module
\[
H^{mn-t^2-k+2}_{I_t(X_{k-1})}(A_{k-1}),
\]
which, by the inductive hypothesis, is nonzero, vanishes when localized at $x_k$.  Therefore the endomorphism $\cdot x_k$ is not injective, so that, in the above long exact sequence, $H^{mn-t^2-k+1}_{I_t(X_{k-1})}(A_k)\neq 0$. But, by virtue of the Independence Theorem, applied to the canonical epimorphism from  $A_{k-1}$ to $A_k$, we have
\[
H^{mn-t^2-k+1}_{I_t(X)}(A_k)\simeq H^{mn-t^2-k+1}_{I_t(X_{k-1})}(A_k),
\]
whence our claim follows.
\end{proof}

\begin{ex}
Consider the polynomial ring $R=K[x_1,\dots, x_7]$, where $\chara K=0$, and the sparse matrix
\[
X = \left[\begin{array}{ccc}
0 & 0 & x_1 \\
x_2 & x_3 & x_4 \\
x_5 & x_6 & x_7
\end{array} \right].
\]
Then $\ara I_2(X)= 9-4-2+1=4$ by Corollary \ref{C.kZeros}. In fact, $I_2(X)=\sqrt{(q_1',q_2',q_3',q_4')}$, where $q_i'=q_i(X')$ and
\[
X' = \left[\begin{array}{ccc}
x_2x_7-x_4x_5+(x_3x_7-x_4x_6)x_7-x_1x_6 & x_3x_7-x_4x_6 & x_1 \\
x_2 & x_3 & x_4 \\
x_5 & x_6 & x_7
\end{array} \right],
\text{ i.e.,}
\]
\begin{align*}
& \begin{aligned}
  q_1' \!=\! [123|123]&=x_4^2x_6^2x_7 - 2x_3x_4x_6x_7^2 + x_3^2x_7^3 + x_1x_4x_6^2 - x_1x_3x_6x_7 - x_1x_3x_5 + x_1x_2x_6
 \end{aligned}\\
& \begin{aligned}
  q_2'\!=\! [12|12]&=  -x_3x_4x_6x_7 + x_3^2x_7^2 - x_3x_4x_5 - x_1x_3x_6 + x_2x_4x_6\\
\end{aligned}\\
& \begin{aligned}
  q_3'\!=\! [12|13]\!+\![13|12] &= -x_4^2x_6x_7 - x_4x_6^2x_7 + x_3x_4x_7^2 + x_3x_6x_7^2 - x_4^2x_5\\
  &-x_1x_4x_6 - x_1x_6^2 + x_2x_4x_7 - x_3x_5x_7 + x_2x_6x_7 - x_1x_2,\\
\end{aligned}\\
& \begin{aligned}
  q_4'\!=\! [12|23]\!+\![13|13]\!+\![23|12]&=  -x_4x_6x_7^2 + x_3x_7^3 - x_4^2x_6 + x_3x_4x_7 - x_4x_5x_7\\
  &-x_1x_6x_7 + x_2x_7^2 - x_1x_3 - x_1x_5 - x_3x_5 + x_2x_6.
\end{aligned}
\end{align*}
\end{ex}

Using arguments similar to those developed in the proof of Proposition \ref{P.antidiagonals} and of Corollary \ref{C.kZeros}, one can show the following result about determinantal ideals of maximal minors.

\begin{prop} \label{P.kZerosCols}
Let $0 \leq k\leq n-m$ and suppose that $k$ entries of $X$ are zero. Then $\ara I_m(X)\leq mn-m^2-k+1$.  If char\,$K=0$ and the remaining entries of $X$ are pairwise distinct indeterminates, then
\[
\ara I_m(X)=\cd I_m(X)=mn-m^2-k+1.
\]
\end{prop}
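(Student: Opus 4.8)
For the final statement, note first that it has two halves: the upper bound $\ara I_m(X)\le mn-m^2-k+1$ for an \emph{arbitrary} matrix with $k$ zero entries, and the reverse inequality $\cd I_m(X)\ge mn-m^2-k+1$ in characteristic $0$ for the sparse generic case. The plan is to get the first from the substitution procedure in the proof of Proposition~\ref{P.antidiagonals} and the second from the induction in the proof of Corollary~\ref{C.kZeros}. Taking $t=m$ produces two simplifications that make this possible: there are no minors of order $>m$, so every $q_h(A)$ is automatically a sum of $m$-minors (this is what makes the hypothesis $k\le 2t+1$ superfluous), and the quantity $m+n-2t$ becomes $n-m$, so the operative restriction is precisely $k\le n-m$.

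For the upper bound I would first normalise the position of the zeros. Permuting the columns of $X$ leaves $I_m(X)$ unchanged (the maximal minors are merely permuted, up to sign), so I may assume that the at most $k$ columns containing a zero are the first $p\le k\le n-m$ columns; then every zero lies outside $\Delta:=[1,\dots,m|n-m+1,\dots,n]$, and since each of the columns $1,\dots,s-1$ contains a zero as soon as some zero lies in column $s$, the zero with the $h$-th smallest column index lies in a column $s_h\le h$. Now apply the recursion of Proposition~\ref{P.antidiagonals}: successively replace the zeros with the $k$-th, $(k-1)$-th, $\dots$, first smallest column index by the current values of $q_{mn-m^2+1}=\Delta,\ q_{mn-m^2},\ \dots,\ q_{mn-m^2-k+2}$, obtaining a matrix $X'$, and set $q'_h=q_h(X')$. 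The analogue of inequality~(\ref{3}) is this: a maximal minor of rank $mn-m^2-k+1+h$ has column-index sum exactly $k-h$ below the maximum, so all of its column indices are $\ge n-m-k+h+1>h\ge s_h\ge\dots\ge s_1$ because $k\le n-m$; hence such a minor meets none of the columns in which a substitution is made at that or a later step, so, exactly as in Proposition~\ref{P.antidiagonals}, the substituted entries are fixed by the remaining steps and the entry replacing the zero with $h$-th smallest column index equals $q'_{mn-m^2-k+1+h}$. Then I would conclude as there: $\sqrt{(q'_1,\dots,q'_{mn-m^2-k+1})}\subseteq\sqrt{I_m(X)}$ because $I_m(X')\subseteq I_m(X)$, while for the other inclusion, if $q'_1,\dots,q'_{mn-m^2-k+1}$ all vanish at some ${\bf x}\in K^{mn}$, then by~(\ref{2}) all minors of $X'$ of rank $\le mn-m^2-k+h$ vanish there, and the usual argument — were $q'_{mn-m^2-k+1+h}$ nonzero at ${\bf x}$, then column $s_h$ of $X'({\bf x})$ would be nonzero, and the vanishing $m$-minors of the $m\times(m+1)$ submatrix of $X'({\bf x})$ on all rows and on column $s_h$ together with the columns of a summand $\mu$ would force $\mu$, hence the whole sum, to vanish — together with induction on $h$, (\ref{1}) and Hilbert's Nullstellensatz finishes it.

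For the lower bound, since $\cd I_m(X)\le\ara I_m(X)\le mn-m^2-k+1$, it suffices to show $\cd I_m(X)\ge mn-m^2-k+1$ when $\chara K=0$ and the nonzero entries are pairwise distinct indeterminates, and this is the proof of Corollary~\ref{C.kZeros} with $t=m$. Write $X=X_k$, let $X_j$ arise from the generic $(m\times n)$-matrix by setting $x_1,\dots,x_j$ equal to $0$, put $A_j=R/(x_1,\dots,x_j)$, and prove $H^{mn-m^2-k+1}_{I_m(X)}(A_k)\neq 0$ by induction on $k$ (base $k=0$: \cite[Corollary, p.~440]{BS90}). For $k>0$, the short exact sequence $0\to A_{k-1}\to A_{k-1}\to A_k\to 0$ (first map multiplication by $x_k$) yields a connecting map with image the kernel of multiplication by $x_k$ on $M:=H^{mn-m^2-k+2}_{I_m(X_{k-1})}(A_{k-1})$; now $M\neq 0$ by the inductive hypothesis, while by \cite[Lemma~4.1]{LSW16} one has $I_m(X_{k-1})_{x_k}=I_{m-1}(Y)$ for an $(m-1)\times(n-1)$ matrix $Y$ over $(A_{k-1})_{x_k}$, and
\[
\ara I_{m-1}(Y)\le (m-1)(n-1)-(m-1)^2+1=mn-m^2-n+m+1<mn-m^2-k+2
\]
exactly because $k\le n-m$, so (local cohomology commuting with localisation) $M_{x_k}=0$, the kernel of $\cdot x_k$ on $M$ is nonzero, the connecting map is nonzero, and the Independence Theorem \cite[Theorem~4.2.1]{BS13} applied to $A_{k-1}\twoheadrightarrow A_k$ transfers this to $H^{mn-m^2-k+1}_{I_m(X)}(A_k)\neq 0$. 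The positions of the zeros play no role in this direction.

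The conceptual content is modest, both halves being adaptations of arguments already in the paper; the real work is the combinatorial bookkeeping of the upper bound — checking that the column permutation gives $s_h\le h$, and that this, the rank/column-sum dictionary for maximal minors, and the hypothesis $k\le n-m$ together keep the substituted minors fixed and make the auxiliary $m\times(m+1)$ submatrices drop rank. I expect this to be the main obstacle, essentially because the hypothesis $k\le n-m$ must be used at precisely those points (and in the estimate for $\ara I_{m-1}(Y)$), and it cannot be omitted, as the example in Section~\ref{S.sparseMatrices} already shows.
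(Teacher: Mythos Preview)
Your proposal is correct and matches what the paper intends: the authors only sketch this proof (``using arguments similar to those developed in the proof of Proposition~\ref{P.antidiagonals} and of Corollary~\ref{C.kZeros}''), and you have filled in exactly those arguments with $t=m$. Your column permutation followed by ordering the zeros by column index, and the observation that for maximal minors only column indices matter (so the rank of $[1,\dots,m|b_1,\dots,b_m]$ is governed by $\sum b_i$ alone), are precisely the simplifications that replace the antidiagonal hypothesis of Proposition~\ref{P.antidiagonals} by the bare bound $k\le n-m$; your inequality $b_1\ge n-m-k+h+1>h\ge s_h$ is the exact analogue of~(\ref{3}), and the $m\times(m+1)$ submatrix argument and the local-cohomology induction are carried over verbatim. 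The example following Proposition~\ref{P.kZerosCols} in the paper confirms that the construction the authors have in mind coincides with yours.
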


Proposition \ref{P.kZerosCols} generalizes \cite[Theorem 6.4]{BCMM14}, where the result was proven for $k=m=2$, in the case where the zeros lie on different rows and columns. The technique applied in our proof, however, is completely different, and so are the generators up to radical that we obtain. We present our construction in the following example.

\begin{ex}
Consider the polynomial ring $R=K[x_1, \dots, x_6]$ and the sparse matrix
\[
X = \left[\begin{array}{cccc}
0 & x_1 & x_2 & x_3 \\
x_4 & 0 & x_5 & x_6
\end{array} \right].
\]
If $\chara K=0$, then $\ara I_2(X)= 8-4-2+1=3$ by Proposition \ref{P.kZerosCols}. In fact, $I_2(X)=\sqrt{(q_1',q_2',q_3')}$, where  $q_i'=q_i(X')$, with
\[
X' = \left[\begin{array}{cccc}
x_1x_6-x_3(x_2x_6-x_3x_5) & x_1 & x_2 & x_3 \\
x_4 & x_2x_6-x_3x_5 & x_5 & x_6
\end{array} \right],
\text{ i.e.,}
\]
\begin{align*}
  q_1'&= [12|12]= x_1x_2x_6^2-x_1x_3x_5x_6-x_2^2x_3x_6^2+2x_2x_3^2x_5x_6-x_3^3x_5^2-x_1x_4\\
  q_2'&= [12|13]= x_1x_5x_6-x_2x_3x_5x_6+x_3^2x_5^2-x_2x_4 \\
  q_3'&= [12|14]+[12|23]= x_1x_6^2-x_2x_3x_6^2+x_3^2x_5x_6-x_3x_4 + x_1x_5-x_2^2x_6+x_2x_3x_5.
\end{align*}
\end{ex}

\section{$(2 \times n)$-matrices}\label{S.2nmatrices}

In this section we consider the case of matrices with $2$ rows.

Using Corollary \ref{C.oneZero} we give an affirmative answer to \cite[Question 1]{BCMM14}.

\begin{cor}
Let $n \geq 3$, and let $X$ be a $(2 \times n)$-matrix of linearly dependent linear forms. Then
\[
\ara I_2(X) \leq 2n-4.
\]
\end{cor}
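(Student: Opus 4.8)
The plan is to reduce the statement directly to Corollary \ref{C.oneZero}. Let $X$ be a $(2 \times n)$-matrix whose entries are linearly dependent linear forms in the polynomial ring $R=K[x_1,\dots,x_N]$ over the algebraically closed field $K$. By hypothesis there is a nontrivial $K$-linear relation $\sum_{i,j} c_{ij} X_{ij} = 0$ among the $2n$ entries of $X$. My first step is to use this relation to perform an invertible $K$-linear change of coordinates on the entries (equivalently, an invertible linear substitution of the variables $x_1,\dots,x_N$, possibly after discarding variables not occurring in the entries) so that one entry of the resulting matrix $X'$ becomes identically zero, while $X'$ still has linear forms as entries and $I_2(X') = I_2(X)$ up to the induced ring automorphism. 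Concretely, pick an entry $X_{i_0 j_0}$ with $c_{i_0 j_0} \neq 0$ and substitute $X_{i_0 j_0} \mapsto X_{i_0 j_0} - \frac{1}{c_{i_0 j_0}}\sum_{(i,j)}c_{ij}X_{ij}$; since this is a linear, invertible substitution of the entry $X_{i_0 j_0}$ (viewed as a linear form, it amounts to a linear change in the span of the entries), it extends to an automorphism of $R$ fixing the ideal structure, and after it the $(i_0,j_0)$-entry is zero.

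The second step is simply to apply Corollary \ref{C.oneZero} to $X'$ with $m=2$, $t=2$, $n \geq 3$ (so $t<n$ is satisfied): since $X'$ has a zero entry, $\ara I_2(X') \leq 2n - t^2 = 2n - 4$. Because $\ara$ is invariant under ring automorphisms, $\ara I_2(X) = \ara I_2(X') \leq 2n-4$, which is the claim.

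The only point requiring a little care — and the main (very mild) obstacle — is verifying that a linear dependence relation among the entries genuinely yields a coordinate change making an entry zero. One must check that the substitution described is the image of the entries under an invertible $K$-linear map of the $K$-vector space $V$ spanned by the entries of $X$ inside $R_1$ (or of all of $R_1$), so that it lifts to a graded $K$-algebra automorphism of $R$; the key is that the map $X_{i_0j_0}\mapsto X_{i_0j_0}-\tfrac{1}{c_{i_0j_0}}\sum c_{ij}X_{ij}$, identity on the other entries, is unipotent, hence invertible. Automorphisms of $R$ preserve $\ara$ of any ideal, so the bound transfers back. I expect this to be short; the substance of the corollary has already been done in Theorem \ref{main} and Corollary \ref{C.oneZero}.
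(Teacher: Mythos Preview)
Your reduction does not go through. The substitution you describe, replacing $X_{i_0j_0}$ by $X_{i_0j_0}-\frac{1}{c_{i_0j_0}}\sum_{i,j}c_{ij}X_{ij}$, is literally the identity on the matrix: since $\sum_{i,j}c_{ij}X_{ij}=0$ already holds in $R$, you are subtracting zero, and the $(i_0,j_0)$-entry is unchanged, not annihilated. If instead you mean to act on the $2n$ entries as \emph{formal} symbols $Y_{ij}$ and send $Y_{i_0j_0}\mapsto -\frac{1}{c_{i_0j_0}}\sum_{(i,j)\neq(i_0,j_0)}c_{ij}Y_{ij}$, that linear map has rank $2n-1$, so it is not unipotent and not invertible; there is no induced automorphism of $R$, and there is no reason for $I_2$ to be preserved.

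More to the point, the statement you are implicitly assuming---that a linear dependence among the entries always allows one to make an entry zero by row/column operations and a change of variables---is false. The Hankel matrix
\[
X=\begin{pmatrix} x_1 & x_2 & x_3\\ x_2 & x_3 & x_4\end{pmatrix}
\]
has linearly dependent entries (e.g.\ $X_{12}=X_{21}$), yet no $GL_2\times GL_3\times\mathrm{Aut}(R)$-equivalent matrix has a zero entry; indeed the entries outside any $2\times 2$ submatrix are two independent variables, so neither Theorem~\ref{main} nor Corollary~\ref{C.oneZero} applies directly. This is exactly why the paper invokes the Kronecker--Weierstrass classification: it splits off the scroll blocks (such as the Hankel matrix above), which require the separate results of B\u{a}descu--Valla or \cite{BCMM14}, from the nilpotent/Jordan blocks, where an entry genuinely can be made zero by elementary row operations and Corollary~\ref{C.oneZero} applies. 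Your argument covers only the latter case.
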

\begin{proof}
According to the Kronecker-Weierstrass theory of matrix pencils (see \cite[Chapter 8]{G59}), the matrix $X$ is equivalent to a concatenation of nilpotent, Jordan and scroll blocks (see \cite[Section 3]{BCMM14} for details). If $X$ contains at least one nilpotent or Jordan block, then one of the entries of $X$ is zero or can be annihilated by elementary row operations, so that $\ara I_2(X) \leq 2n-4$ by Corollary \ref{C.oneZero}. If, on the other hand, $X$ is a concatenation of scroll blocks, then some of them have at least two columns, so that the claim follows by \cite[Theorem 2]{BV10} or by \cite[Theorem 4.2]{BCMM14} (and, for $n>3$, also from Theorem \ref{main}).
\end{proof}

The next result is a consequence of Proposition \ref{P.kZerosCols}. In the special case of a $(2 \times n)$-matrix, the claim is true without any additional restrictions on $k$.
\begin{cor}
Let $n \geq 3$, $k \geq 0$, and let $X$ be a $(2 \times n)$-matrix. Suppose that $k$ entries of $X$ outside the minor $\Delta=[1,2 | n-1,n]$ are zero, then $\ara I_2(X) \leq 2n-3-k$.
If char\,$K=0$ and the remaining entries are pairwise distinct indeterminates, then
\[
\ara I_2(X)=\cd I_2(X)=2n-3-k.
\]
\end{cor}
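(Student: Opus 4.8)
The plan is to reduce the statement to Proposition~\ref{P.kZerosCols} whenever the zeros are arranged so that its hypotheses apply, and otherwise to rearrange the matrix by elementary operations preserving $I_2(X)$ up to radical until they do. First I would note that for a $(2\times n)$-matrix the upper bound $k \leq n-m = n-2$ in Proposition~\ref{P.kZerosCols} is the only obstruction to a direct application, so the whole point is to show that in the $2$-row case this restriction is unnecessary. When $k \leq n-2$ and the $k$ zeros already lie outside $\Delta = [1,2\mid n-1,n]$, Proposition~\ref{P.kZerosCols} (applied after, if necessary, permuting columns so that the zeros avoid the last two and the nonzero entries are still distinct indeterminates) gives directly $\ara I_2(X) \leq 2n-3-k$ and, in characteristic $0$ with distinct-indeterminate entries, the equality with $\cd I_2(X)$. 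The mild subtlety is that column and row permutations send $X$ to a matrix $X'$ with $I_2(X') = I_2(X)$, so both $\ara$ and $\cd$ are unchanged; I would state this explicitly.

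Next I would handle the remaining range $n-1 \leq k \leq 2n-4$, i.e.\ the cases not covered by the bound in Proposition~\ref{P.kZerosCols}. Here the total number of entries of a $(2\times n)$-matrix is $2n$, and four of them belong to $\Delta$, so there are $2n-4$ entries outside $\Delta$; thus the hypothesis "$k$ entries outside $\Delta$ are zero" forces $k \leq 2n-4$, and when $k$ is this large the matrix is extremely sparse. The key observation is that a $2\times n$ matrix with very few nonzero entries outside the last two columns has many zero columns, and a zero column can be deleted: if $X$ has a zero column then $I_2(X) = I_2(\widetilde X)$ where $\widetilde X$ is the $2\times(n-1)$ matrix obtained by deleting it, and one checks that the desired bound $2n-3-k$ for $X$ follows from the corresponding bound $2(n-1)-3-\widetilde k$ for $\widetilde X$ with the appropriate $\widetilde k$. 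Iterating, I would reduce to a matrix with no zero columns; but with no zero columns each of the first $n-2$ columns contains at least one nonzero entry, which bounds how large $k$ can be, pushing us back into the range handled by the first part. One has to track the bookkeeping so that "$k$ zeros outside $\Delta$" is preserved under deletion and that the nonzero entries remain distinct indeterminates; this is routine but must be done carefully because deleting a column changes $n$.

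The last piece is the lower bound $\cd I_2(X) \geq 2n-3-k$ in characteristic $0$, which is needed for the equalities. For $k \leq n-2$ this is exactly the content of Proposition~\ref{P.kZerosCols}. For larger $k$, after the column-deletion reduction above we land on a matrix $\widetilde X$ to which Proposition~\ref{P.kZerosCols} applies, giving $\cd I_2(\widetilde X) = 2(n-1)-3-\widetilde k$; since $I_2(X) = I_2(\widetilde X)$ and the relevant numerics match (each deletion of a zero column decreases both $n$ and $k$ by one, leaving $2n-3-k$ invariant), the lower bound transfers. Alternatively, one can run a Brodmann long-exact-sequence / Independence-Theorem argument directly, as in the example after Proposition~\ref{P.antidiagonals}, localizing at a suitable nonzero entry to reduce to the generic $(1\times(n-1))$ or generic $(2\times n')$ case; but the deletion route is cleaner.

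The hard part will be the bookkeeping in the second paragraph: making precise how deleting a zero column interacts with the quantity "$k$ entries outside $\Delta$", since $\Delta$ itself is defined in terms of the last two columns and those can shift, and ensuring the induction on $n$ terminates with a matrix whose zero-entry count falls within the scope of Proposition~\ref{P.kZerosCols}. Once that reduction is set up cleanly, both the upper bound and—in characteristic $0$—the equality with $\cd$ follow formally from Proposition~\ref{P.kZerosCols} and the invariance of $\ara$ and $\cd$ under column deletion and permutation.
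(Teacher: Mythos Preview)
Your approach is essentially the paper's: delete the zero columns and apply Proposition~\ref{P.kZerosCols} to the resulting smaller matrix. The paper does this in one stroke rather than by cases or iteration: if $h$ is the number of zero columns (all necessarily among the first $n-2$, since the zeros lie outside $\Delta$), then after deletion one has a $2\times(n-h)$ matrix with $k-2h$ zeros, and since each surviving column among the first $n-2$ carries at most one zero, $k-2h\leq (n-h)-2$, so Proposition~\ref{P.kZerosCols} applies directly and yields $2(n-h)-3-(k-2h)=2n-3-k$. One bookkeeping slip in your write-up: deleting a zero column of a $2$-row matrix removes \emph{two} zeros, so $k$ drops by $2$ while $n$ drops by $1$; this is exactly why $2n-3-k$ is invariant, not the ``both decrease by one'' you stated (which would change it).
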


\begin{proof}
Let $0 \leq h \leq n-2$ be the number of columns of $X$ containing only zero entries. Hence, $I_2(X)=I_2(X')$, where $X'$ is the matrix obtained from $X$ by dropping the zero columns. Thus $X'$ has $n-h \geq 1$ columns. By Proposition \ref{P.kZerosCols} it follows that
\[
\ara I_2(X) = \ara I_2(X') \leq 2(n-h)-3-(k-2h) = 2n-3-k.
\]
The last part of the claim follows from Proposition \ref{P.kZerosCols} applied to $X'$.
\end{proof}

\section{Conclusions}
The proofs of Theorem \ref{main} and Proposition \ref{P.antidiagonals} are based on explicit constructions of the polynomials generating the ideal $I_t(X)$ up to radical. The following example shows a special case in which these methods can be combined in order to produce $mn-t^2-k+1$  polynomials when the matrix $X$ contains $k$ pairwise disjoint sets $S_1$, $S_2,\dots,S_k$ of algebraically dependent entries. The polynomials are obtained by a recursive procedure: it starts at $X=X_0$ and, at the $i$-th step, transforms the matrix $X_{i-1}$ into a new matrix $X_i$ by replacing the entries in $S_i$ according to the algorithm described in the proof of Theorem \ref{main} and taking $F$ to be the polynomial expressing the algebraic dependence in $S_i$. The resulting matrix is $X'=X_k$ and the polynomials $q_1(X'), \dots, q_{mn-t^2-k+1}(X')$ generate the ideal $I_t(X)$ up to radical.
\begin{ex}
Consider the polynomial ring $K[x,y,z,a,b,c,d]$ and the matrix
\[
X = \left[\begin{array}{ccccc}
x^2 &y^2 & z^2 & a & b \\
x^3 & y^3 & z^3 & c & d
\end{array} \right].
\]
The first three columns are sets of algebraically dependent entries. We apply the construction recursively with respect to the third, the second and the first column.
We then obtain
$$\sqrt{I_2(X)}=\sqrt{([12|12],[12|13],[12|14]+[12|23],[12|15]+[12|24])},$$ where the minors on the right-hand side are minors of the matrix
\[
X' = \left[\begin{array}{ccccc}
x^2+w & y^2+u & z^2+(ad-bc) & a &b \\
x^3& y^3 & z^3 & c &d
\end{array} \right],
\]
and we have set
$$w=(y^2+((z^2+(ad-bc)d-z^3b))d-y^3b+(z^2+(ad-bc))c-z^3a, \qquad u=(z^2+(ad-bc))d-z^3b.$$
\end{ex}

We finally propose some open questions.

Let $X$ be an $(m \times n)$-matrix with entries in a polynomial ring over a field $K$. Theorem \ref{main} gives an affirmative answer to \cite[Question 8.1]{LSW16} when the entries of $X$ outside some $(t \times t)$-submatrix of $X$ are algebraic dependent over $K$. But the answer is also known to be true in some cases where the algebraically dependent entries do not fulfil this condition  (see, e.g., \cite[Example 8.3]{LSW16}). The general case is still open.

\begin{problem}
Let $X$ be an ($m \times n$)-matrix whose algebraically dependent entries belong to all $(t \times t)$-submatrices of $X$. Is it true that $\ara I_t(X) \leq mn-t^2$?
\end{problem}

In Corollary \ref{C.kZeros} and Proposition \ref{P.kZerosCols}, we proved that, for some sparse generic matrices with $k$ zero entries, $\ara I_t(X)=\cd I_t(X) =mn-t^2-k+1$ in characteristic zero. For $k=0$, we know from \cite{BS90} that this equality still holds in positive characteristics, but $\cd I_t(X)<\ara I_t(X)$.

\begin{problem}
In the assumptions of Corollary \ref{C.kZeros} and Proposition \ref{P.kZerosCols}, if $\chara K=p>0$, is $\ara I_t(X)=mn-t^2-k+1$?
\end{problem}

Two important classes of matrices are the symmetric and alternating matrices. The cohomological dimension and the arithmetical rank of determinantal and Pfaffian ideals of these matrices have been computed in \cite{B95} and \cite{RW16}.

Let $X$ be an alternating ($n \times n$)-matrix with $k$ symmetric pairs of zero entries outside the main diagonal. If $k$ fulfils the assumptions of Proposition \ref{P.antidiagonals}, using similar arguments, one could compute the arithmetical rank and, in characteristic zero, the cohomological dimension, showing that they are equal and their value is $k$ less than in the generic case.

\begin{problem}
If $X$ is a  symmetric $(n \times n)$-matrix with exactly $k$ zero entries, what are the cohomological dimension and the arithmetical rank of $I_t(X)$? Notice that, for generic symmetric matrices, if $\chara K=2$, $\ara I_t(X)$ depends on the parity of $t$ and for even $t$ it differs from the value of the arithmetical rank in characteristic zero.
\end{problem}

\section*{Acknowledgements}
The first author is indebted to INdAM and NSF for funding. The second author was supported by INdAM.

\end{document}